\documentclass{article}

\usepackage{amssymb}
\usepackage{amsmath,amsfonts,amsthm,amstext}
\usepackage{dsfont}
\usepackage[english]{babel}
\usepackage[utf8]{inputenc}
\usepackage[all]{xy}
\usepackage[pdftex]{hyperref}
\usepackage[pdftex]{color,graphicx}
\usepackage{mathrsfs}
\usepackage{appendix}
\usepackage{enumerate}
\usepackage{todonotes}
\usepackage{csquotes}
\usepackage{textcomp}
\usepackage{yfonts}
\usepackage{mathtools}

\theoremstyle{plain}
\newtheorem{theorem}{Theorem}

\newtheorem{lemma}[theorem]{Lemma}

\title{Waring–Goldbach problems for one square and higher powers}

\author{
  Geovane Matheus Lemes Andrade \\
  \texttt{gmla.andrade@gmail.com}
}

\date{}

\begin{document}
\maketitle

\begin{center}
Department of Mathematics, University of Brasília, Brasília, DF, Brazil
\end{center}

\renewcommand{\thefootnote}{}
\footnotetext{\textit{Mathematics Subject Classification:} Primary 11P55; Secondary 11P32.}
\footnote{\textit{Key words and phrases:} Waring–Goldbach problem, circle method, additive representations.}
\renewcommand{\thefootnote}{\arabic{footnote}}

\begin{abstract}
    We prove that every sufficiently large odd integer can be expressed as a sum of one square and fourteen fifth powers, all of primes. In addition, we establish that every sufficiently large even integer can be written as a sum of one square, one biquadrate, and twelve fifth powers of primes.
\end{abstract}

\section{Introduction}

The problem of representing large integers as sums of one square together with higher powers of natural numbers has a long history. The earliest significant contributions are due to Stanley \cite{Stanley1930}, Theorems 10–12. She demonstrated that the minimal number $s(k)$ required to guarantee that every sufficiently large integer can be expressed as a sum of one square and $s(k)$ $k$th powers satisfies
$$s(3) \leq 7, \hspace{0.3cm} s(4) \leq 14, \hspace{0.3cm} s(5) \leq 28, \hspace{0.3cm} s(k) \ll 2^{k-2}\left(\tfrac{1}{2}k-1\right), \hspace{0.3cm} (k>5).$$

Over time, these bounds have been refined. The strongest currently known results are $s(3) \leq 5$, due to Watson \cite{Watson1972}; Br\"udern and Wooley \cite{partiBrudernWooley2024} proved that $s(4)=7$, and moreover
$$s(k) \leq 2k-1 \hspace{0.4cm}(3\leq k \leq 6), \qquad s(k) \leq 2k \hspace{0.4cm}(7\leq k \leq 11),$$
while for all $k\geq 3$ one has
$$s(k) \leq \left\lfloor\left(\tfrac{3}{4}+2\log 2\right)k\right\rfloor+2.$$
Furthermore, they showed that every sufficiently large integer can be written as a sum of one square of prime and $s_1(k)$ $k$th powers of natural numbers, where
$$s_1(3) \leq 6, \qquad s_1(4) \leq 8, \qquad s_1(5) \leq 12.$$

When all variables are restricted to primes, only a few results are known. For $k=3$, the best approximation to date is due to Li and Zhang \cite{LiZhang2018}, who showed that every sufficiently large even integer can be expressed as a sum of a square of a natural number with at most six prime factors, together with five cubes of primes. For $k=4$, L\"u and Cai \cite{LuCai2019} established representations involving one square and nine biquadrates of primes. When $k=5$, M. Zhang, J. Li, and F. Xue \cite{zhang2024arXiv240203076Z} established a Waring–Goldbach analogue of Br\"udern and Kawada \cite{BrudernKawada2011}, proving that every sufficiently large even integer can be expressed as a sum of one square and seventeen fifth powers of primes. In this paper, we improve this result by proving the following theorem.

\begin{theorem}\label{teo2}
    For each sufficiently large odd integer $n$, the equation
    \begin{equation}\label{eq2}
        n=x_1^2+x_2^5+\dots+x_{15}^5
    \end{equation}
    admits a solution in which all variables are prime. Moreover, for every sufficiently large even integer $n$, the equation
    \begin{equation}\label{eq3}
        n=x_1^2+x_2^4+x_3^5+\dots+x_{14}^5
    \end{equation}
    has a solution with all $x_j$ prime.
\end{theorem}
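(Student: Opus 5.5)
We will prove both statements of Theorem \ref{teo2} with the Hardy--Littlewood circle method, using prime exponential sums weighted by $\log p$. Write $P_k = n^{1/k}$ and
$$f_k(\alpha)=\sum_{P_k/2<p\le P_k}(\log p)\,e(\alpha p^k).$$
For \eqref{eq2} the weighted number of representations is
$$R(n)=\int_0^1 f_2(\alpha)f_5(\alpha)^{14}e(-\alpha n)\,d\alpha,$$
and for \eqref{eq3} we take the analogous integral with $f_2f_4f_5^{12}$. The expected size is $\mathfrak S(n)\,n^{\sigma-1}$ up to logarithms, where $\sigma=\tfrac12+\tfrac{14}{5}=\tfrac{33}{10}$, respectively $\sigma=\tfrac12+\tfrac14+\tfrac{12}{5}=\tfrac{63}{20}$. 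We split $[0,1)$ into major arcs $\mathfrak M$ (with $q\le P^{\theta}$ for a small power) and minor arcs $\mathfrak m$.

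\textbf{Major arcs.} We treat $\mathfrak M$ with the standard Siegel--Walfisz approximation $f_k(\alpha)\approx \frac{S_k^*(q,a)}{\varphi(q)}v_k(\beta)$, which gives $\mathfrak J(n)\mathfrak S(n)$. The singular integral satisfies $\mathfrak J(n)\asymp n^{\sigma-1}$. For the singular series we show absolute convergence from Hua-type bounds $S_k^*(q,a)\ll q^{1/2+\epsilon}$ for the prime moduli (with many variables this is automatic). We then show $\mathfrak S(n)\gg 1$ by analysing the local factors. The parity conditions come from $p=2$: for odd primes every power is odd, so the sum of $15$ odd terms is odd, which is why $n$ is odd in \eqref{eq2}; $14$ odd terms give an even $n$ in \eqref{eq3}. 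For $p=3,5,11$ and the other primes with $(p-1)\mid k$ or $\gcd(k,p-1)>1$, we check solubility with units modulo $p^{\gamma}$ by counting. For fifth powers the relevant primes are $p=11$ (fifth powers of units mod $11$ are $\pm1$) and $p=5$ (mod $25$ the units' fifth powers are $\pm1,\pm7$). With $14$ or $12$ fifth powers plus the square, together with the biquadrate in the second case, Cauchy--Davenport style arguments show every residue is attainable. If we enlarge $\mathfrak M$ using Kumchev/Ren-type major-arc estimates, the pruning becomes routine.

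\textbf{Minor arcs.} This is the main obstacle. A pointwise bound on $f_2$ alone is too weak, so we would bound
$$\int_{\mathfrak m}|f_2f_5^{14}|\le \sup_{\mathfrak m}|f_5|^{a}\Bigl(\int_0^1|f_2|^2|f_5|^{2b}\Bigr)^{1/2}\Bigl(\int_0^1|f_5|^{2c}\Bigr)^{1/2},$$
with the exponents chosen to total $14$. Here we need three inputs:
\begin{enumerate}[(i)]
\item Kumchev's minor-arc estimate $f_5(\alpha)\ll P_5^{1-\delta+\epsilon}$ with $\delta$ around $1/40$–$1/36$;
\item a mean value of $|f_2|^2|f_5|^{2b}$, obtained by counting solutions of $x_1^2-x_2^2=\sum(\pm y^5)$ and bounding it by the number of solutions of a sum of fifth powers times $P_2^{\epsilon}$, using the divisor bound on $x_1^2-x_2^2$ plus the diagonal term;
\item the sharpest available moments $\int|f_5|^{2c}$, via Vaughan--Wooley efficient-differencing / smooth-number moments transferred to primes, or the Br\"udern--Wooley work cited above.
\end{enumerate}
The exponent bookkeeping must give a saving of $n^{\sigma-1}(\log n)^{-A}$. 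For \eqref{eq3} the same scheme applies, and Hua's inequality handles the extra $f_4$ factor through $\int|f_2f_4|^2\ll P_2P_4^{1+\epsilon}$-type bounds.

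If the straightforward Hölder split falls short by a small power, we would first replace some $f_5$ by a smooth-number Weyl sum and use the Br\"udern--Kawada "one square" device: bound $\int|f_2|^2|g|^{2t}$ via Hooley/Kloosterman estimates on major arcs of the square. We expect this numerical balancing, together with securing a minor-arc estimate for $f_2$ on the wider arcs where $f_5$ is not yet small, to be the crux.
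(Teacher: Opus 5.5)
\paragraph{The minor arcs do not close with full-length fifth powers.} Your major-arc outline is workable, but the minor-arc argument has a genuine gap. With all fourteen fifth powers at full length, your H\"older split cannot be closed with any known mean value, and your proposed fallback is not available.

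Put $P=P_5$, so the target is $n^{23/10}=P^{23/2}$. Grant yourself three inputs:
\begin{enumerate}
\item[(a)] $\sup_{\mathfrak m}|f_5|\ll P^{1-\delta}$ with $\delta\le 1/36$;
\item[(b)] the optimal bound $\int_0^1|f_2|^2|f_5|^{2b}\,d\alpha\ll P^{2b+\epsilon}$, which needs $b\ge 3$ (for $b=3$ the diagonal term already requires Hooley's estimate $\int_0^1|f_5|^6\,d\alpha\ll P^{7/2+\epsilon}$, not just a divisor argument);
\item[(c)] $\int_0^1|f_5|^{2c}\,d\alpha\ll P^{2c-\mu_c+\epsilon}$, where necessarily $\mu_c\le 5$ because of the major arcs.
\end{enumerate}
Your bound is then $P^{14-a\delta-\mu_c/2+\epsilon}$ with $a+b+c=14$, so you need $\mu_c+2a\delta\ge 5$ with $c\le 11$. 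Since $2a\delta\le 22\delta<1$, this asks for a nearly optimal moment ($\mu_c>4.3$) of at most $22$ full-length fifth powers. For example, $a=0$ needs $\mu_{11}=5$, and $a=3$ needs $\mu_8\ge 5-6\delta$, whereas Hua's lemma gives $\mu_8=4$. Nothing of this strength is known. Optimal moments only become available with considerably more variables, which pushes the count beyond fourteen.

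Your fallback does not rescue this. Replacing some $f_5$ by a smooth Weyl sum, or transferring smooth-number moments to primes as in your item (iii), is not legitimate. Every variable must be prime, and a mean value of prime sums can be dominated only by the full integer count, never by a count of smooth solutions. The same objection applies to the second equation, where in addition $\int_0^1|f_2f_4|^2\,d\alpha$ is not the input that is needed.

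\paragraph{The missing idea: diminishing ranges.} Unlike smoothness, diminishing ranges are compatible with primality. The paper keeps $f_2f_5^4$ (respectively $f_2f_4f_5^2$) at full length. It puts the remaining ten prime fifth powers into the Kawada--Wooley ranges $(\tfrac12P_5^{\lambda_j},P_5^{\lambda_j}]$, forming $\mathcal G=g_9^2\prod_{j\le 8}g_j$. Lemma 6.1 of Kawada--Wooley then gives the essentially diagonal bound $\int_0^1|\mathcal G|^2\,d\alpha\ll n^{1-\lambda+\epsilon}$ with $\lambda\approx 0.0037$. For any set $\mathcal E\subset[0,1]$, Cauchy--Schwarz gives
\[
\int_{\mathcal E}|f_2f_5^4\mathcal G|\,d\alpha\le\Bigl(\int_0^1|f_2|^2|f_5|^6\,d\alpha\Bigr)^{1/2}\Bigl(\int_0^1|\mathcal G|^2\,d\alpha\Bigr)^{1/2}\sup_{\alpha\in\mathcal E}|f_5(\alpha)|.
\]
With Hooley's bound $n^{6/5+\epsilon}$ for the first factor, it suffices that $|f_5|\le P_5^{1-3\lambda}$ on $\mathcal E$. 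That is a saving of only about $P_5^{0.011}$.

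The complement of such an $\mathcal E$ lies in narrow arcs by the Kumchev--Wooley Weyl-type inequality. There the full-length sums are pruned using Kumchev's bound $f_k\ll P_kL^C(q+n|q\alpha-a|)^{-1/2+\epsilon}$. For the second equation the split is $\bigl(\int|f_2|^2|f_5|^6\bigr)^{1/4}\bigl(\int|f_2|^2|f_4|^4\bigr)^{1/4}\bigl(\int|\mathcal G|^2\bigr)^{1/2}\sup|f_5|^{1/2}$, together with Br\"udern's bound $\int_0^1|f_2|^2|f_4|^4\,d\alpha\ll n^{1+\epsilon}$.
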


These results are obtained via the circle method. We combine mean value estimates due to Hooley \cite{Hooley1981}, Br\"udern \cite{Bru87}, Thanigasalam \cite{Thanigasalam1989,Thanigasalam1994}, and Kawada and Wooley \cite{KawadaWooley2001}. After applying H\"older’s inequality, a fractional contribution involving a fifth power remains to be estimated. This part is handled using recent advances in the Vinogradov mean value theorem \cite{KumWoo16}, together with Kumchev’s bounds \cite{Kum06} for exponential sums over primes. A pruning argument then completes the proof.

\textit{Notation}. Throughout this paper, $p$ denotes a prime number, $e(\alpha)$ abbreviates $\exp(2\pi i \alpha)$, and $\phi(q)$ is Euler’s totient function. The implicit constants in Vinogradov’s and Landau’s symbols depend on the value assigned to $\epsilon$.

\section{Preliminaries}

Let $n$ be a large positive integer. For $k \geq 2$, define
$$P_k = n^{1/k}, \qquad L = \log n, \qquad f_k(\alpha)=\sum \limits_{\frac{1}{2}P_k < p \leq P_k} e(\alpha p^k)\log p.$$

We now introduce parameters extracted from \cite{KawadaWooley2001}. Set 
\begin{align*}
    \lambda_j&=\left(\frac{33}{40}\right)^{j-1} \qquad (1\leq j \leq 6),\\
    \\
    \lambda_7&=\left(\frac{33}{40}\right)^5\frac{136}{163},\qquad
    \lambda_8=\left(\frac{33}{40}\right)^5\frac{576}{815},\qquad \lambda_9 = \left(\frac{33}{40}\right)^5\frac{512}{815},\\ 
    \\
    \Lambda&=2\lambda_9+\sum_{j=1}^{8}\lambda_j.
\end{align*}
A straightforward computation yields $\Lambda = 5(1-\lambda)$, with 
\begin{align}\label{mu}
    \lambda=\frac{304\,729\,213}{83\,456\,000\,000}.
\end{align}

For $1\leq j \leq 9$, let 
\begin{align*}
    g_{j}(\alpha) &= \sum \limits_{\frac{1}{2}P_5^{\lambda_j} < p \leq P_5^{\lambda_j}} e(\alpha p^5)\log p.
\end{align*}

Define 
\begin{align*}
    \mathcal{G}(\alpha) = g_9(\alpha)^2\prod_{j=1}^{8}g_j(\alpha).
\end{align*}

For brevity, we often write $f_k$ and $g_j$ for $f_k(\alpha)$ and $g_j(\alpha)$, respectively. We also introduce
\begin{align*}
    F_1(\alpha)&=f_2(\alpha)f_5(\alpha)^4\mathcal{G}(\alpha),\\
    F_2(\alpha) &= f_2(\alpha)f_4(\alpha)f_5(\alpha)^2\mathcal{G}(\alpha).
\end{align*}

For $j=1,2$, by orthogonality, the integrals
\begin{align}\label{ort}
    \nu_j(n) = \int_{0}^{1} F_j(\alpha)e(-\alpha n)\,d\alpha
\end{align}
count the number of solutions of \eqref{eq2} and \eqref{eq3}, respectively, in prime variables restricted to certain intervals and weighted by a product of $\log x_j$. Define 
\begin{align*}
    \Theta_1 &= \frac{3}{10}+\frac{\Lambda}{5},\\
    \Theta_2 &= \frac{3}{20}+\frac{\Lambda}{5}.
\end{align*}

Our goal is to establish the lower bound
$$\nu_j(n) \gg n^{\Theta_j}$$
for all large $n\equiv j \pmod 2$.

 Fix a real number $B \geq 1$. Let $\mathfrak{M}$ be the union of all intervals 
\begin{align}\label{mqamajor}
    \mathfrak{M}(q,a) = \{\alpha \in[0,1]: |\alpha - a/q| \leq L^B/n \},
\end{align} 
with $0\leq a \leq q$, $(a,q)=1$ and $1\leq q \leq L^B$. Let $\mathfrak{m} = [0,1]\setminus \mathfrak{M}$. We aim to show that for all sufficiently large $n\equiv j \pmod 2$, 
\begin{align}\label{task}
    \int \limits_{\mathfrak{M}} F_j(\alpha)e(-\alpha n)d\alpha \gg n^{\Theta_j}, \qquad \int \limits_{\mathfrak{m}} F_j(\alpha)e(-\alpha n)d\alpha \ll n^{\Theta_j}L^{-1}.
\end{align}
With this, Theorem \ref{teo2} will be established. We begin by analyzing the contribution from $\mathfrak{M}$.

\section{The major arcs}

For $k \geq 2$ and $0<\eta\leq 1$, define 
\begin{align*}
    S_k(q,a) = \sum \limits_{\substack{x=1 \\ (x,q)=1}}^{q} e\left(\frac{ax^k}{q} \right), \qquad v_{k,\eta}(\beta) = \frac{1}{k}\sum \limits_{2^{-k}n^{\eta}< m \leq n^{\eta}} m^{1/k - 1} e(\beta m).
\end{align*}

Following Hua’s proof of Lemma 6 in \cite{Hua38}, we establish the result below.

\begin{lemma}\label{hua6}
    Let $a,q$ be integers with $0\leq a \leq q$, $(a,q)=1$, and $1\leq q \leq L^B$. Then, for $\alpha \in \mathfrak{M}(q,a)$, one has 
    $$ \sum \limits_{\frac{1}{2}P_k^{\eta} < p \leq P_k^{\eta}} e(\alpha p^k)\log p = \frac{S_k(q,a)}{\phi(q)}v_{k,\eta}(\alpha -a/q) + O(P_k^{\eta}e^{-c_1\sqrt{L}}),$$
    where $c_1>0$ is an absolute constant.
\end{lemma}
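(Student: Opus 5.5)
We follow Hua's approach: split the prime sum by residue classes modulo $q$, apply the Siegel--Walfisz theorem to each class, and then remove the phase $e(\beta p^k)$ by partial summation. Write $\alpha = a/q + \beta$ with $|\beta| \le L^B/n$, and put $X = P_k^{\eta}$, so $X^k = n^{\eta}$. Primes $p \mid q$ satisfy $p \le q \le L^B$, so the condition $p > X/2$ excludes them. Grouping the remaining primes by $p \equiv h \pmod q$ with $(h,q)=1$ gives
$$\sum_{X/2<p\le X} e(\alpha p^k)\log p = \sum_{\substack{h=1\\ (h,q)=1}}^{q} e\!\left(\frac{ah^k}{q}\right) \sum_{\substack{X/2<p\le X\\ p\equiv h \ (q)}} e(\beta p^k)\log p .$$

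For the inner sum, let $\vartheta(t;q,h)=\sum_{p\le t,\,p\equiv h\,(q)}\log p$. Since $q \le L^B \ll (\log X)^{B}$, the Siegel--Walfisz theorem gives
$$\vartheta(t;q,h) = \frac{t}{\phi(q)} + E(t), \qquad E(t) \ll X e^{-c\sqrt{\log X}} \quad (X/2\le t\le X).$$
Note that $\log X = \frac{\eta}{k}L$, and $\eta, k$ are fixed, so this bound is $\ll X e^{-c'\sqrt L}$. Writing the inner sum as the Stieltjes integral $\int_{X/2}^{X} e(\beta t^k)\,d\vartheta(t;q,h)$, the main term is $\phi(q)^{-1}\int_{X/2}^{X} e(\beta t^k)\,dt$. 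Integrating the error term by parts bounds it by
$$\max_{t}|E(t)|\Bigl(1 + \int_{X/2}^{X}\bigl|\tfrac{d}{dt}e(\beta t^k)\bigr|\,dt\Bigr) \ll X e^{-c'\sqrt L}\,(1+|\beta| X^k) \ll X e^{-c'\sqrt L}\, L^B .$$
The factor $L^B$ is absorbed by reducing the constant to $c_1 < c'$.

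Summing over the at most $q \le L^B$ classes $h$ costs one more factor $L^B$, which is absorbed the same way, and collects the main terms into
$$\frac{S_k(q,a)}{\phi(q)}\int_{X/2}^{X} e(\beta t^k)\,dt .$$
It remains to replace this integral by $v_{k,\eta}(\beta)$. Substituting $u = t^k$ gives $\int_{X/2}^{X} e(\beta t^k)\,dt = \frac1k \int_{2^{-k}n^\eta}^{n^\eta} u^{1/k-1}e(\beta u)\,du$. We compare this with the sum $\frac1k\sum_{2^{-k}n^\eta<m\le n^\eta} m^{1/k-1}e(\beta m)$ via Euler--Maclaurin, or simply a termwise mean value estimate. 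On each unit interval the integrand changes by
$$\ll u^{1/k-2} + |\beta|\, u^{1/k-1}.$$
Summed over $u \asymp n^{\eta}$, the total discrepancy is $\ll 1 + |\beta| X \ll 1 + L^B X n^{-1}$, which is $O(1)$ since $X \le n^{1/k} < n$. Since $|S_k(q,a)|/\phi(q)\le q/\phi(q)\ll \log L$, this error is also negligible.

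The only delicate point is the uniformity of Siegel--Walfisz in $q \le L^B$. The theorem gives this uniformity, but its constant $c$ depends on $B$ and is ineffective; that is acceptable here because $B$ is fixed. We also rely on the fact that the factors $L^B$ from $|\beta|X^k$ and from the sum over $h$ are only polylogarithmic, so $e^{-c\sqrt{L}}$ swallows them.
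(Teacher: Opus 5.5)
Your proposal is correct and follows essentially the argument the paper invokes when it cites Hua's Lemma 6. You split into residue classes modulo $q$, apply Siegel--Walfisz uniformly for $q\le L^B$, remove $e(\beta p^k)$ by partial summation at a cost of only $1+|\beta|n^{\eta}\le 1+L^B$, and then replace the resulting integral by $v_{k,\eta}(\beta)$ with an $O(1)$ error. Two small refinements: $|S_k(q,a)|\le\phi(q)$ holds trivially, and the exponent constant in Siegel--Walfisz can be taken absolute, with only the implied constant depending ineffectively on $B$. Together these give the lemma's absolute $c_1$ for the finitely many fixed pairs $(k,\eta)$ used in the paper.
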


We will apply Lemma \ref{hua6} with $\eta=\lambda_r$. For convenience, we write $v_{5,r}$ for $v_{5,\lambda_r}$. When $\eta=1$, we simply write $v_k$ instead of $v_{k,1}$.

\begin{lemma}\label{major}
    Let $1\leq j\leq 2$. For large $n\equiv j \pmod 2$ one has 
    $$ \int \limits_{\mathfrak{M}} F_j(\alpha)e(-\alpha n)d\alpha \gg n^{\Theta_j}.$$
\end{lemma}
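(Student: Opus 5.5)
Substituting the approximation of Lemma \ref{hua6} into each factor of $F_j$ on $\mathfrak{M}(q,a)$ gives the plan. For $\alpha\in\mathfrak{M}(q,a)$ put $\beta=\alpha-a/q$. Each $f_k$ and $g_r$ is replaced by $\frac{S_k(q,a)}{\phi(q)}v_{k,\eta}(\beta)$, with error $O(P_k^\eta e^{-c_1\sqrt L})$. We expand the product of $s$ factors ($s=15$ for $j=1$, $s=14$ for $j=2$) telescopically, bounding every other factor trivially by its length. Then the error from replacing $F_j$ by its main-term approximant is $\ll n^{\Theta_j+1}e^{-c_1\sqrt L/2}$ pointwise on $\mathfrak{M}$. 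Note that $n^{\Theta_j+1}$ is exactly the product of the lengths $P_2P_5^4P_5^{\Lambda}$, respectively $P_2P_4P_5^2P_5^\Lambda$. Since $\operatorname{meas}(\mathfrak{M})\ll L^{3B}/n$, the total error is $O(n^{\Theta_j}L^{-1})$. What remains is the product
$$\mathfrak{S}_j(n;L^B)\,J_j^*(n),$$
where $\mathfrak{S}_j(n;Q)=\sum_{q\le Q}\sum_{(a,q)=1}T_j(q,a)e(-an/q)$. Here $T_1=\phi(q)^{-15}S_2S_5^{14}$ and $T_2=\phi(q)^{-14}S_2S_4S_5^{12}$. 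The integral $J_j^*$ is the integral over $|\beta|\le L^B/n$ of the corresponding product of $v$'s against $e(-\beta n)$.

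For the singular integral, we extend $J_j^*$ to $[-1/2,1/2]$. The bound $v_{k,\eta}(\beta)\ll P_k^{\eta}(1+n^\eta|\beta|)^{-1/k}$ (partial summation) applied to $v_2$ alone, with the others bounded trivially, is not integrable enough. Instead we use it on all factors: the exponents $1/2+4\cdot1/5+\dots$ sum well beyond $1$, and the gain coming from the $g$'s with $\eta<1$ only helps. Hence the tail $|\beta|>L^B/n$ contributes $\ll n^{\Theta_j}L^{-\delta B}$. The full integral $J_j(n)$ counts weighted solutions of $m_1+\dots+m_s=n$ with each $m_i$ in its dyadic-type range $(2^{-k}n^\eta,n^\eta]$, weighted by $\prod m_i^{1/k_i-1}/k_i$. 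Since the ranges (the lower ends $2^{-k}n^{\eta}$ are small and the upper ones sum past $n$) contain a box of positive proportion around a point summing to $n$, a standard lattice-point count gives $J_j(n)\gg n^{\Theta_j}$. Concretely, fix $m_1\sim n/2$, fix the others near $c\,n^{\eta_i}$ for suitable constants, and let one free full-size variable absorb the remainder.

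For the singular series we use Hua's bound $S_k(q,a)\ll q^{1/2+\epsilon}$ for prime powers, with the standard estimates at $p^t$ ($S_k(p^t,a)=0$ for $t$ large relative to $k$ and $p\nmid k$). Then $T_j(q,a)\ll q^{-5/2}$ or better, so $\mathfrak{S}_j$ converges absolutely, the tail $q>L^B$ is $O(L^{-B/2})$, and $\mathfrak{S}_j=\prod_p\chi_p$ with $\chi_p=1+O(p^{-3/2})$ for large $p$. Each $\chi_p$ equals $\lim_t p^{t(1-s)}\phi(p^t)^{s}\cdot p^{-t s}\cdot M_j(p^t)$-type densities of solutions with units. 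We therefore have to show $\chi_p>0$ for every $p$, and this is the main obstacle. Via Hensel lifting it reduces to exhibiting nonsingular unit solutions modulo $p^\gamma$ ($\gamma=1$ for $p\nmid 2k$, larger for $p=2,5,\dots$).

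The key cases are $p=2$, $p=3$, $p=5$, $p=11$. For $p=2$, all variables are odd. For $j=1$ the sum of 15 odd powers is $\equiv 15\equiv1$, which forces $n$ odd; for $j=2$ we get $14$ odd powers, so $n$ is even, and this is where the parity condition enters. One must then check solvability modulo $8$ or $32$ using the fact that fifth powers of units cover all odd residues mod $2^t$. For $p=5$, $x^5\equiv\pm1,\pm7\pmod{25}$ on units; we count that 14 (resp.\ 12) such terms plus a unit square (plus a fourth power $\equiv1\pmod 5$) hit every class mod $25$, using Cauchy–Davenport. For $p=11$, fifth powers of units are $\pm1$. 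With at least 12 such terms plus one square we cover all residues, again by a Cauchy–Davenport or explicit argument, and lifting then works since $p\nmid 5x$. For the remaining primes $p\ge 7$, $p\ne11$, the standard Weil-bound count gives $\chi_p>0$. Combining these, $\mathfrak{S}_j(n)\gg1$ and the lemma follows.
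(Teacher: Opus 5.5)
Your proposal is correct, and its analytic part matches the paper's. Both arguments replace $F_j$ pointwise on $\mathfrak{M}(q,a)$ via Lemma~\ref{hua6}, factor the main term as a truncated singular series times a truncated singular integral, complete both, and bound the singular integral below by counting solutions of $\sum m_i=n$ in a box. One small difference there: your decay bound $v_{k,\eta}(\beta)\ll P_k^{\eta}(1+n^{\eta}|\beta|)^{-1/k}$ is weaker than the paper's $w_j(\beta)\ll n^{\Theta_j+1}(1+n|\beta|)^{-2}$, which exploits the dyadic lower cut-offs of the $m$-ranges. It still suffices, since the full-length factors (including $v_{5,1}=v_5$) give total exponent $3/2$, resp.\ $27/20$.

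The genuine difference is how you prove $\mathfrak{S}_j(n)\gg1$. You use the general machinery of local densities $\chi_p=\lim_{t\to\infty}p^{t}\phi(p^t)^{-s}M_j(p^t)$; your displayed formula for this limit is garbled. You then lift nonsingular unit solutions modulo $p^{\gamma}$ by Hensel's lemma, with a prime-by-prime case split and Weil bounds for generic $p$.

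The paper instead observes, via Hua's Lemma~4, that $U_j(p^t,a)=0$ for every prime $p$ and every $t\ge2$. For odd $p$ the square kills it, since $S_2(p^t,a)=0$; for $p=2$ a fifth power does, since $S_5(2^t,a)=0$. Hence $A_{n,j}$ is supported on squarefree $q$, and $\mathfrak{S}_j(n)=\prod_p p(p-1)^{-\ell_j}M_{n,j}(p)$. A single application of Cauchy--Davenport in $\mathbb{F}_p$ then gives $M_{n,j}(p)\ge1$ for all $p$ at once, failing only at $p=2$ for the wrong parity.

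In your language, $x_1$ (for odd $p$) or any fifth-power variable (for $p=2$) is nonsingular modulo $p$, so $\gamma=1$ for every prime. Your computations modulo $8$, $32$ and $25$, the Weil estimates, and the separate treatment of $p=11$ are therefore unnecessary. That is just as well, because Cauchy--Davenport does not apply in $\mathbb{Z}/25\mathbb{Z}$ as you invoke it. Your mod-$25$ claim is nevertheless true, e.g.\ by Chowla's theorem, since every nonzero element of $\{\pm1,\pm7\}-1$ is coprime to $25$; lifting through $x_1$ using $5\nmid 2x_1$ simply avoids the issue.

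Your route is more robust: it would still work for a system with no square, where $p=5$ genuinely requires working modulo $25$. The paper's route is shorter and uniform in $p$.
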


\begin{proof}
    We begin by approximating $F_j(\alpha)$ using the auxiliary functions $S_k(q,a)$ and $v_{k,\ell}$. Suppose $\alpha \in \mathfrak{M}(q,a)$ with $q \leq L^B$ and $(a,q)=1$. Let $\beta = \alpha - a/q$, and define 
    \begin{align*}
        U_1(q,a)&=S_2(q,a)S_5(q,a)^{14},\\
        w_1(\beta)&=v_{2}(\beta)v_{5}(\beta)^4v_{5,9}(\beta)^2\prod_{j=1}^{8}v_{5,j}(\beta),\\
        \\
        U_2(q,a)&=S_2(q,a)S_4(q,a)S_5(q,a)^{12},\\
        w_2(\beta)&=v_{2}(\beta)v_{4}(\beta)v_{5}(\beta)^2v_{5,9}(\beta)^2\prod_{j=1}^{8}v_{5,j}(\beta).
    \end{align*}
   
    Let $\ell_1=15$ and $\ell_2=14$. By Lemma \ref{hua6}, we obtain 
    $$F_j(\alpha) =  \phi(q)^{-\ell_j}U_j(q,a)w_j(\alpha - a/q) + O(n^{1+\Theta_j}L^{-4B}).$$
    Integrating this over $\mathfrak{M}$, a set of measure $O(L^{3B}/n)$, yields
    \begin{align}
         \int \limits_{\mathfrak{M}}F_j&(\alpha)e(-\alpha n) d\alpha \nonumber \\&= \sum \limits_{q \leq L^B}\frac{A_{n,j}(q)}{\phi(q)^{\ell_j}} \int_{-L^B/n}^{L^B/n} w_j(\beta)e(-\beta n)d\beta + O(n^{\Theta_j} L^{-B}), \label{over}
     \end{align} 
     where 
     $$A_{n,j}(q) = \sum \limits_{\substack{a=1 \\ (a,q)=1}}^{q} U_j(q,a)e(-an/q).$$
     
    By Lemma 5 of \cite{Hua38}, we have $U_j(q,a) \ll q^{\ell_j/2 + \epsilon}$ for coprime $a,q$, and hence $A_{n,j}(q) \ll q^{\frac{\ell_j+2}{2}+\epsilon}$ uniformly in $n$. It follows that the series 
    $$ \mathfrak{S}_j(n) = \sum \limits_{q =1}^{\infty}\frac{A_{n,j}(q)}{\phi(q)^{\ell_j}}$$ 
    converges absolutely. Moreover, routine calculations show that 
    \begin{align*}
        \sum \limits_{q \leq L^B}\frac{A_{n,j}(q)}{\phi(q)^{\ell_j}} = \mathfrak{S}_j(n) + O(L^{-B})
    \end{align*}
    holds uniformly in $n$. Also, by Lemma 6.2 of \cite{bookVau}, one has
    \begin{align}\label{wcestimate}
        w_j(\beta) \ll n^{\Theta_j +1}(1+n|\beta|)^{-2}.
    \end{align}
    Thus, the sum in \eqref{over} can be replaced by $\mathfrak{S}_j(n)$ with negligible error. 

    We aim to prove that $ \mathfrak{S}_j(n) \gg 1$ for all large $n\equiv j \pmod 2$. Following the argument used in the proof of Lemma 2.11 of \cite{bookVau}, one can likewise establish that $A_{n,j}(q)$ is multiplicative. Furthermore, Lemma 4 of \cite{Hua38} shows that $A_{n,j}(q) = 0$ unless $q$ is square-free. This yields
    \begin{align}\label{prod}
        \mathfrak{S}_j(n) = \prod \limits_{p}(1+(p-1)^{-\ell_j}A_{n,j}(p)).
    \end{align}

    For $j=1,2$, let $M_{n,j}(p)$ denote the number of solutions of \eqref{eq2} and \eqref{eq3}, respectively, in the finite field $\mathbb{F}_p$, with all variables non-zero. By orthogonality modulo $p$, 
$$1 + (p-1)^{-\ell_j} A_{n,j}(p)= p(p-1)^{-\ell_j}M_{n,j}(p).$$
Applying the Cauchy–Davenport theorem (Lemma 2.14 of \cite{bookVau}) gives $M_{n,j}(p) \geq 1$, except when  $p=2$ and $2 \mid n$ for $j=1$, and $p=2$ with $2 \nmid n$ for $j=2$. Hence the factors in \eqref{prod} are positive for all $n\equiv j \pmod 2$, and the uniform convergence with respect to $n$ yields a number $p_0$ such that, for $n\equiv j \pmod 2$,
$$\mathfrak{S}_j(n) \geq \frac{1}{2} \prod \limits_{p\leq p_0}(1+(p-1)^{-\ell_j}A_{n,j}(p)) \geq \frac{1}{2} \prod \limits_{p\leq p_0}p^{-\ell_j+1}.$$
Thus, the lower bound $ \mathfrak{S}_j(n) \gg 1$ holds for all large $n\equiv j \pmod 2$.

To conclude the proof, it remains to show that the integral on the right-hand side of \eqref{over} is $\gg n^{\Theta_j}$. Using the estimate \eqref{wcestimate}, we deduce 
\begin{align}\label{right}
     \int_{-L^B/n}^{L^B/n} w_j(\beta)e(-\beta n)d\beta = \int_{-1/2}^{1/2} w_j(\beta)e(-\beta n)d\beta + O(n^{\Theta_j}L^{-B}).
\end{align}

Suppose $j=1$. By orthogonality, the integral on the right-hand side of \eqref{right} is 
\begin{align}\label{sum2}
    \frac{1}{2\cdot 5^{14}} \sum (m_1)^{-\frac{1}{2}}\left(\prod_{j=2}^{15}m_j\right)^{-\frac{4}{5}},
\end{align}
where the sum runs over $m_1,\dots,m_{15}$ subject to 
\begin{align}\label{sixsum2}
   \sum_{j=1}^{15} m_j=n,
\end{align}
with
 \begin{align}
    & 2^{-2}n<m_1 \leq n, \qquad 2^{-5}n<m_j \leq n \qquad (j=2,3,4,5), \label{restr11}\\
    &2^{-5}n^{\lambda_{j-5}}<m_j\leq n^{\lambda_{j-5}} \qquad (j=6,\dots,15). \label{restr22}
\end{align}
For any choice of $m_1 \leq \tfrac{17}{64}n$, $m_2,m_3,m_4 \leq \tfrac{3}{64}n$, and  $m_j \leq \tfrac{3}{64}n^{\lambda_{j-5}}$ for $j=6,\dots,15$, in accordance with \eqref{restr11} and \eqref{restr22}, one can solve \eqref{sixsum2} with $m_5$ satisfying \eqref{restr11}. Hence, the sum in \eqref{sum2} admits the lower bound $n^{\Theta_1}$.

Suppose $j=2$. The integral on the right-hand side of \eqref{right} is
\begin{align}\label{sum3}
    \frac{1}{2\cdot 4 \cdot 5^{12}} \sum (m_1)^{-\frac{1}{2}}(m_2)^{-\frac{3}{4}}\left(\prod_{j=3}^{14}m_j\right)^{-\frac{4}{5}},
\end{align}
where the sum extends over $m_1,\dots,m_{14}$ subject to 
\begin{align}\label{sixsum3}
   \sum_{j=1}^{14} m_j=n,
\end{align}
with
 \begin{align}
    & 2^{-2}n<m_1 \leq n, \qquad 2^{-4}n<m_2 \leq n, \label{restr111}\\
    &2^{-5}n<m_3,m_4\leq n, \qquad2^{-5}n^{\lambda_{j-4}}<m_j\leq n^{\lambda_{j-4}} \qquad (j=5,\dots,12), \label{restr222} \\
    &2^{-5}n^{\lambda_9}<m_{13},m_{14}\leq n^{\lambda_9}. \label{restr33}
\end{align}
Whenever $m_1 \leq \tfrac{17}{64}n$, $m_2\leq \tfrac{5}{64}n$, $m_3 \leq \tfrac{3}{64}n$,  $m_j \leq \tfrac{3}{64}n^{\lambda_{j-4}}$ for $j=5,\dots,12$, and $m_{13},m_{14} \leq \tfrac{3}{64}n^{\lambda_{9}}$ in accordance with \eqref{restr111}, \eqref{restr222}, and \eqref{restr33}, one can solve \eqref{sixsum3} with $m_4$ satisfying \eqref{restr222}. Consequently, the sum in \eqref{sum3} is bounded below by $n^{\Theta_2}$. This completes the proof of the lemma.

\end{proof}

\section{The minor arcs}

A preliminary remark is in order. The mean values arising in the proofs of the subsequent lemma can be bounded above by a power of $L$ multiplied by the number of integer solutions to an associated Diophantine equation. This perspective allows us to dispense with the logarithmic weights and thereby regard $f_k$ as classical Weyl sums. 

\begin{lemma}\label{estimatehooley}
    Let $k_1,k_2,k_3\geq 3$ be integers satisfying $\tfrac{1}{k_1}+\tfrac{1}{k_2}+\tfrac{1}{k_3}\geq \tfrac{3}{5}$. Then  
    \begin{align*}
        \int_{0}^{1}|f_2f_{k_1}f_{k_2}f_{k_3}|^2 \, d\alpha \ll n^{2\left( \frac{1}{k_1}+\frac{1}{k_2}+\frac{1}{k_3}\right)+\epsilon}.
    \end{align*}
\end{lemma}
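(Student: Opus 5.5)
The plan is to reduce the statement to a counting problem and then invoke Hooley's mean value estimate for one square and three higher powers. Following the preliminary remark, I drop the logarithmic weights at the cost of a factor $L^{8}$, which is absorbed into $n^{\epsilon}$, and bound the sum over primes by the sum over all integers in the same dyadic ranges. The integral $\int_0^1|f_2f_{k_1}f_{k_2}f_{k_3}|^2\,d\alpha$ is then at most $L^8$ times the number $N$ of integer solutions of
$$x_1^2-y_1^2 = (y_2^{k_1}-x_2^{k_1})+(y_3^{k_2}-x_3^{k_2})+(y_4^{k_3}-x_4^{k_3}),$$
with $x_1,y_1\le P_2$ and $x_i,y_i\le P_{k_{i-1}}$ for $i=2,3,4$.

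Next I separate the diagonal from the off-diagonal solutions.
\begin{itemize}
\item \emph{Diagonal.} When $x_1=y_1$, the right-hand side vanishes. Trivially there are at most $P_2\prod_i P_{k_i}^{2}$ such solutions, but this is too large: it is $n^{1/2+2\sum 1/k_i}$. So this step needs genuine mean value input, and I handle it via the full Hooley/Br\"udern-type estimate below rather than by counting.
\item \emph{Off-diagonal.} When $x_1\neq y_1$, the quantity $h=(x_1-y_1)(x_1+y_1)$ is a nonzero integer of size $O(n)$. By the divisor bound, each value of the right-hand side determines $(x_1,y_1)$ in $O(n^{\epsilon})$ ways. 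Hence this part is bounded by $n^{\epsilon}\prod P_{k_i}^2=n^{2\sum 1/k_i+\epsilon}$, which is acceptable.
\end{itemize}

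The diagonal $x_1=y_1$ contributes $P_2$ times the number $T$ of solutions of $\sum_i (x_i^{k_i}-y_i^{k_i})=0$, i.e.\ $T=\int_0^1|f_{k_1}f_{k_2}f_{k_3}|^2\,d\alpha$. I therefore need $T\ll n^{2\sum 1/k_i-1/2+\epsilon}$. This is false in general, since $T$ is at least the number of trivial solutions, $\prod P_{k_i}$. So the naive diagonal split fails, and the correct route is the standard one: for the sum $\sum_i 1/k_i\ge 3/5$ in the stated regime (for example $k=5,5,5$, or $4,5,5$), cite Hooley's theorem \cite{Hooley1981}, as extended by Br\"udern \cite{Bru87} and Thanigasalam \cite{Thanigasalam1989}. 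These papers establish exactly that $\int|f_2f_{k_1}f_{k_2}f_{k_3}|^2\ll n^{2\sum1/k_i+\epsilon}$ whenever $\sum 1/k_i\ge 3/5$. Their method writes $x_1^2-y_1^2=m$ and uses that $f_2$ has mean square $\ll P_2^{1+\epsilon}$ on average against the generating function of the three higher powers. The mechanism is to apply Weyl/Hua differencing to the square, giving the bound $|f_2(\alpha)|^2\ll P_2+\sum_{h}\min(P_2,\|2h\alpha\|^{-1})$, and then to count the solutions $\sum(x_i^{k_i}-y_i^{k_i})=2hz$ via divisor estimates.

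The main obstacle is precisely this diagonal term. The hypothesis $\sum1/k_i\ge 3/5$ is what ensures that the total count of higher-power solutions, $\prod P_{k_i}^2=n^{2\sum1/k_i}$, dominates $P_2\cdot T$. To verify it I would use Hua-type bounds such as $T\ll \prod P_{k_i}^{1+\epsilon}\cdot(\text{small})$, together with the fact that $P_2\prod P_{k_i}=n^{1/2+\sum1/k_i}\le n^{2\sum 1/k_i}$ exactly when $\sum1/k_i\ge1/2$. The extra margin up to $3/5$ covers the non-trivial diagonal solutions, and this is where I would rely on the cited estimates of Hooley, Br\"udern and Thanigasalam rather than reprove them.
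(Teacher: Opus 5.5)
Your opening split ($x_1=y_1$ versus $x_1\neq y_1$, with the latter handled by the divisor bound to give $O(n^{2s+\epsilon})$, where $s=\frac1{k_1}+\frac1{k_2}+\frac1{k_3}$) is exactly the paper's argument. The gap is that you then abandon this split for a wrong reason and never supply the one estimate it requires.

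Your objection that $T\ge\prod P_{k_i}=n^{s}$ is incompatible with $T\ll n^{2s-1/2+\epsilon}$ is false. We have $n^{s}\le n^{2s-1/2}$ as soon as $s\ge\frac12$, and the hypothesis guarantees this; your own last paragraph observes it. What is missing is a concrete nontrivial bound for the diagonal count, namely
\[
T=\int_0^1|f_{k_1}f_{k_2}f_{k_3}|^2\,d\alpha\ll (P_{k_1}P_{k_2}P_{k_3})^{7/6+\epsilon}=n^{\frac76 s+\epsilon}.
\]
The paper takes this from Lemma 11 of Hooley. It also follows directly from Hua's lemma. For $k\ge3$, the bounds $\int_0^1|f_k|^4\,d\alpha\ll P_k^{2+\epsilon}$ and $\int_0^1|f_k|^8\,d\alpha\ll P_k^{5+\epsilon}$ give $\int_0^1|f_k|^6\,d\alpha\ll P_k^{7/2+\epsilon}$ by Cauchy--Schwarz. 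H\"older's inequality then gives $T\le\prod_{i}\bigl(\int_0^1|f_{k_i}|^6\,d\alpha\bigr)^{1/3}$.

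With this bound, $P_2T\ll n^{\frac12+\frac76 s+\epsilon}\le n^{2s+\epsilon}$ precisely when $s\ge\frac35$. This computation is the entire role of the hypothesis. Your phrases ``$T\ll\prod P_{k_i}^{1+\epsilon}\cdot(\text{small})$'' and ``the extra margin up to $3/5$ covers the non-trivial diagonal solutions'' do not replace it.

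The rest of the proposal does not close the gap either. Attributing the full inequality to Hooley, Br\"udern and Thanigasalam is an appeal to the very statement you are asked to prove; the paper derives it, and uses Hooley only for the diagonal count. Your differencing sketch for $|f_2|^2$ produces, at $h=0$, the same term $P_2T$, so it needs the same bound on $T$. Inserting the $n^{\frac76 s+\epsilon}$ estimate into your original diagonal/off-diagonal split gives a complete proof, identical to the paper's.
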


\begin{proof}
    By Lemma 11 of \cite{Hooley1981}, the number $N_0$ of solutions to 
    $$z_1^2-z_2^2=x_1^{k_1}+x_2^{k_2}+x_3^{k_3}- y_1^{k_1}-y_2^{k_2}-y_3^{k_3},$$
    with $ \tfrac{1}{2}P_{2}<z_1,z_2\leq P_{2}$ and $\tfrac{1}{2}P_{k_j}<x_j,y_j\leq P_{k_j} $ for $j=1,2,3$, where the right-hand side equals zero, satisfies 
    $$N_0\ll P_2n^{\frac{7}{6}\left( \frac{1}{k_1}+\frac{1}{k_2}+\frac{1}{k_3}\right)+\epsilon}.$$ 
    The number of choices for $x_j,y_j$ that make the right-hand side nonzero is $O(P_{k_1}^2P_{k_2}^2P_{k_3}^2)$. For each such choice, both $x_1-x_2$ and $x_1+x_2$ divide this side. Thus, by a divisor-counting argument, there are $O(P_2^\epsilon)$ possibilities for $x_1,x_2$. This gives 
    \begin{align*}
        \int_{0}^{1}|f_2f_{k_1}f_{k_2}f_{k_3}|^2 \, d\alpha &\ll P_2n^{\frac{7}{6}\left( \frac{1}{k_1}+\frac{1}{k_2}+\frac{1}{k_3}\right)+\epsilon} + P_2^{\epsilon}n^{2\left( \frac{1}{k_1}+\frac{1}{k_2}+\frac{1}{k_3}\right)} \\
        &\ll n^{2\left( \frac{1}{k_1}+\frac{1}{k_2}+\frac{1}{k_3}\right)+\epsilon},
    \end{align*}
    where, in the final estimate, we use the hypothesis $\tfrac{1}{k_1}+\tfrac{1}{k_2}+\tfrac{1}{k_3}\geq \tfrac{3}{5}$. 
\end{proof}

Consider the following mean values:
\begin{align*}
    J_1 &= \int_{0}^{1} |f_{2}|^2|f_{5}|^6 \, d\alpha, \qquad
    J_2 = \int_{0}^{1} |f_{2}|^2|f_4|^4 \, d\alpha,\qquad
    J_3=\int_{0}^{1} |\mathcal{G}(\alpha)|^2 \, d\alpha.
\end{align*}
Applying Lemma \ref{estimatehooley} with $k_1=k_2=k_3=5$, together with Lemma 1 of \cite{Bru87}, we obtain
\begin{align}\label{j1j2}
    J_1 \ll n^{\frac{6}{5}+\epsilon}, \qquad J_2 \ll n^{1+\epsilon}.
\end{align}
Moreover, Lemma 6.1 of Kawada \& Wooley \cite{KawadaWooley2001} yields 
\begin{align}\label{j3j4}
    J_3 &\ll n^{1-\lambda+\epsilon},
\end{align}
where $\lambda$ is defined in \eqref{mu}.

With this preparation, we isolate the contribution to \eqref{ort} arising from the sets
\begin{align*}
    \mathcal{E}_1 &= \{ \alpha \in [0,1] : |f_{5}(\alpha)| \leq P_5^{1-3\lambda-10\tau} \},\\
     \mathcal{E}_2 &= \{ \alpha \in [0,1] : |f_{5}(\alpha)| \leq P_5^{1-5\lambda-20\tau} \},
\end{align*}
where $\tau = 2^{-100}$.

\begin{lemma}\label{epsilon}
   For $1\leq j \leq 2$, one has 
   $$ \int \limits_{\mathcal{E}_j} F_j(\alpha)e(-\alpha n)d\alpha \ll n^{\Theta_j - \tau}.$$
\end{lemma}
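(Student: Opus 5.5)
On $\mathcal{E}_j$ we take the supremum of $|f_5|$ over $\mathcal{E}_j$, using the defining bound, for the appropriate number of copies of $f_5$. The rest of $|F_j|$ we integrate over all of $[0,1]$ via H\"older's inequality, combining the mean values $J_1,J_2,J_3$ from \eqref{j1j2} and \eqref{j3j4}. The number of $f_5$'s taken out in supremum is chosen so that the remaining product splits exactly into these mean values. The only input beyond the earlier lemmas is the identity $\Theta_1 = \tfrac{13}{10}-\lambda$ and $\Theta_2=\tfrac{23}{20}-\lambda$, which follows from $\Lambda=5(1-\lambda)$.

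For $j=1$, I write $|F_1|\le \sup_{\mathcal{E}_1}|f_5|\cdot |f_2f_5^3|\,|\mathcal{G}|$ and apply Cauchy--Schwarz:
$$\int_{\mathcal{E}_1}|F_1|\,d\alpha \le P_5^{1-3\lambda-10\tau} J_1^{1/2}J_3^{1/2}\ll n^{\frac{1-3\lambda-10\tau}{5}+\frac{3}{5}+\frac{1-\lambda}{2}+\epsilon}.$$
The exponent here is $\tfrac{13}{10}-\tfrac{11}{10}\lambda-2\tau+\epsilon$, which is at most $\Theta_1-2\tau+\epsilon$. Choosing $\epsilon<\tau$ gives the claim, with room to spare.

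For $j=2$, a naive treatment fails. Pulling out $|f_5|^2$ in supremum and bounding $\int|f_2f_4\mathcal{G}|$ by Cauchy--Schwarz, whether against the trivial bound for $\int|f_2f_4|^2$ or against $J_2$, yields the exponent $\tfrac{51}{40}-\tfrac52\lambda$, which exceeds $\Theta_2$. This balancing is the main obstacle. The fix is to take out only $|f_5|^{1/2}$ and keep $|f_5|^{3/2}$ inside the integral, so that $J_1$ absorbs it. I therefore write
$$|F_2|\le \sup_{\mathcal{E}_2}|f_5|^{1/2}\cdot\bigl(|f_2|^2|f_5|^6\bigr)^{1/4}\bigl(|f_2|^2|f_4|^4\bigr)^{1/4}\bigl(|\mathcal{G}|^2\bigr)^{1/2}.$$
The powers check: $f_2$ appears with exponent $\tfrac12+\tfrac12=1$, $f_4$ with $1$, $f_5$ with $\tfrac12+\tfrac32=2$, and $\mathcal{G}$ with $1$.

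H\"older's inequality with exponents $(4,4,2)$ then gives
$$\int_{\mathcal{E}_2}|F_2|\,d\alpha\le P_5^{\frac{1-5\lambda-20\tau}{2}}J_1^{1/4}J_2^{1/4}J_3^{1/2}\ll n^{\frac{1-5\lambda-20\tau}{10}+\frac{3}{10}+\frac14+\frac{1-\lambda}{2}+\epsilon}.$$
The exponent equals $\tfrac{23}{20}-\lambda-2\tau+\epsilon=\Theta_2-2\tau+\epsilon$. Taking $\epsilon<\tau$ yields $n^{\Theta_j-\tau}$ in both cases.

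As remarked at the start of the minor arc section, the logarithmic weights cost at most powers of $L$. These are absorbed into $n^{\epsilon}$, so the bounds \eqref{j1j2} and \eqref{j3j4} apply verbatim. Finally, $\bigl|\int_{\mathcal{E}_j}F_je(-\alpha n)\,d\alpha\bigr|\le\int_{\mathcal{E}_j}|F_j|\,d\alpha$ completes the argument.
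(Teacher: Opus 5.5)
Your proposal is correct and is the paper's proof. The paper uses exactly the bounds $\int_{\mathcal{E}_1}|F_1|\,d\alpha\le J_1^{1/2}J_3^{1/2}\sup_{\mathcal{E}_1}|f_5|$ and $\int_{\mathcal{E}_2}|F_2|\,d\alpha\le J_1^{1/4}J_2^{1/4}J_3^{1/2}\sup_{\mathcal{E}_2}|f_5|^{1/2}$, and it leaves as a ``straightforward calculation'' the exponent bookkeeping that you carry out correctly via $\Theta_1=\tfrac{13}{10}-\lambda$ and $\Theta_2=\tfrac{23}{20}-\lambda$.
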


\begin{proof}
    By H\"older’s inequality, we deduce that 
    \begin{align*}
        \int \limits_{\mathcal{E}_1} |F_1(\alpha)|\,d\alpha &\leq J_1^{1/2}J_3^{1/2} \sup \limits_{\alpha \in \mathcal{E}_1}|f_{5}(\alpha)|,\\
        \int \limits_{\mathcal{E}_2} |F_2(\alpha)|\,d\alpha &\leq J_1^{1/4}J_2^{1/4}J_3^{1/2} \sup \limits_{\alpha \in \mathcal{E}_2}|f_{5}(\alpha)|^{1/2}.
    \end{align*}
    Employing the bounds in \eqref{j1j2} and \eqref{j3j4}, together with the definition of $\mathcal{E}_j$ and a straightforward calculation, the desired estimate follows.
\end{proof}

For the complement of $\mathcal{E}_j$, we apply Lemma 2.2 from \cite{KumWoo16} in conjunction with partial summation, taking $k=5$ and $X = \tfrac{1}{2}P_5$. It follows that 
$$[0,1]\setminus \mathcal{E}_j \subset \mathfrak{R},$$
where $\mathfrak{R}$ is the union of the disjoint intervals
$$ \mathfrak{R}(q,a) = \{\alpha \in [0,1]: |q\alpha -a| \leq n^{-\frac{118}{119}}\},$$
with $0\leq a \leq q$, $(a,q)=1$, and $1 \leq q \leq n^{\frac{1}{119}}$. 

Define a function $\Upsilon:\mathfrak{R} \to [0,1]$ by 
$$\Upsilon(\alpha) = (q+n|q\alpha - a|)^{-1} \qquad (\alpha \in \mathfrak{R}(q,a)).$$
Then, by Theorem 2 of \cite{Kum06} and partial summation, there exists a constant $C>0$ such that for $\alpha \in \mathfrak{R}$ and $k=2,4,5$, one has 
\begin{align*}
    f_{k}(\alpha) &\ll P_k L^C\Upsilon(\alpha)^{1/2 - \epsilon}.
\end{align*}
Observe that $g_1(\alpha)=f_5(\alpha)$.
Thus, using the trivial bound for $g_r(\alpha)$, we obtain
\begin{align*}
    F_j(\alpha) &\ll n^{1+\Theta_j}L^{5C}\Upsilon(\alpha)^{39/16}.
\end{align*}

We now integrate over $\mathfrak{R}\setminus \mathfrak{M}$. Routine calculations yield 
$$ \int \limits_{\mathfrak{R}\setminus \mathfrak{M}} |F_j(\alpha)|d\alpha \ll n^{1+\Theta_j}L^{5C}\int \limits_{\mathfrak{R}\setminus \mathfrak{M}} |\Upsilon(\alpha)|^{39/16}d\alpha \ll  n^{\Theta_j}L^{5C-\frac{7}{16}B}.$$
This establishes the following result.

\begin{lemma}\label{ult}
    Let $B\geq 80C + 16$. Then 
    $$\int \limits_{\mathfrak{R}\setminus \mathfrak{M}} |F_j(\alpha)|d\alpha \ll  n^{\Theta_j}L^{-1}.$$
\end{lemma}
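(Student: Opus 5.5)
The argument bounds $|F_j|$ pointwise on $\mathfrak{R}$ by a power of $\Upsilon$ and then integrates that bound over $\mathfrak{R}\setminus\mathfrak{M}$.

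\textbf{Pointwise bound.} For $\alpha\in\mathfrak{R}$ we combine the bounds $f_k(\alpha)\ll P_kL^C\Upsilon(\alpha)^{1/2-\epsilon}$ for $k=2,4,5$ with the trivial bounds for the $g_r$. Take $F_1$ first. It contains $f_2$ and four copies of $f_5$, so the exponent of $\Upsilon$ is $5(1/2-\epsilon)=5/2-5\epsilon$. With $\epsilon$ small this is at least $39/16$. For $F_2$ we use $f_2,f_4,f_5,f_5$ and also $g_1=f_5$, which again gives five factors of exponent $1/2-\epsilon$. In both cases
$$F_j(\alpha)\ll n^{1+\Theta_j}L^{5C}\Upsilon(\alpha)^{39/16},$$
where the size $n^{1+\Theta_j}$ comes from the trivial bound $P_2P_5^4\prod_r P_5^{\lambda_r}$ and its analogue for $F_2$.

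\textbf{Integration.} Decompose $\mathfrak{R}\setminus\mathfrak{M}$ by $q$, and write $\alpha=a/q+\beta$ on $\mathfrak{R}(q,a)$, so that $\Upsilon=(q+nq|\beta|)^{-1}$. If $\alpha\notin\mathfrak{M}$, then either $q>L^B$, or $q\le L^B$ and $|\beta|>L^B/n$.

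\emph{Case $q>L^B$.} Integrating over $\beta$ gives
$$\int (q+nq|\beta|)^{-39/16}\,d\beta\ll q^{-39/16}\cdot\frac{q}{nq}=n^{-1}q^{-39/16}.$$
Summing over the $\le q$ values of $a$ and over $q>L^B$ gives
$$n^{-1}\sum_{q>L^B} q^{-23/16}\ll n^{-1}L^{-7B/16}.$$

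\emph{Case $q\le L^B$, $|\beta|>L^B/n$.} Here
$$\int_{|\beta|>L^B/n}(nq|\beta|)^{-39/16}\,d\beta\ll (nq)^{-1}q^{-23/16}L^{-23B/16}.$$
Summing over $a$ and $q$ gives $n^{-1}L^{-23B/16}$, which is smaller than the first case.

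\textbf{Conclusion.} Multiplying by $n^{1+\Theta_j}L^{5C}$ yields $n^{\Theta_j}L^{5C-7B/16}$. With $B\ge 80C+16$ we get $7B/16\ge 35C+7\ge 5C+1$, which gives the bound $n^{\Theta_j}L^{-1}$ stated in Lemma~\ref{ult}.

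The only delicate point is checking that the exponent $39/16$ stays strictly above $1$ after one power of $q$ is spent on the sum over $a$, so that the $q$-sum converges with a saving of $L^{-7B/16}$. This holds because $39/16-1=23/16>1$. The rest is routine. One should also make sure that the constant $C$ is independent of $B$, which it is, since it comes from Kumchev's bound.
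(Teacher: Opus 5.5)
Your proposal is correct and follows the paper's argument. You use the same pointwise bound $F_j(\alpha)\ll n^{1+\Theta_j}L^{5C}\Upsilon(\alpha)^{39/16}$, including $g_1=f_5$ to get a fifth non-trivial factor for $F_2$, and then integrate over $\mathfrak{R}\setminus\mathfrak{M}$ to obtain $n^{\Theta_j}L^{5C-7B/16}$; your split into $q>L^B$ and $q\le L^B$, $|\beta|>L^B/n$ spells out the ``routine calculations'' the paper leaves implicit.
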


We can now complete the proof of Theorem \ref{teo2}. Choose $B$ so that Lemma \ref{ult} applies. By the definitions of the sets $\mathfrak{M}, \mathfrak{R}$, and $\mathcal{E}_j$, one has 
$$\mathfrak{m}\subset \mathcal{E}_j \cup \mathfrak{R}\setminus \mathfrak{M}.$$
Therefore, combining Lemmas \ref{epsilon}, \ref{ult}, and \ref{major}, we obtain \eqref{task}, which completes the proof of Theorem \ref{teo2}.

\section*{Funding}

This work was supported by CAPES (Coordenação de Aperfeiçoamento de Pessoal de Nível Superior, Brazil).

\section*{Acknowledgements}

The author gratefully acknowledges the Graduate Program in Mathematics at the University of Brasília for the academic support provided during the development of this research. 

\section*{Declaration of Interest}

The author declares that there are no known competing financial interests or personal relationships that could have influenced the work reported in this paper.

\bibliographystyle{elsarticle-num}
\bibliography{sample}

\end{document}